\documentclass[12pt,a4paper]{article}
\usepackage[margin=3.2cm]{geometry}
\usepackage{amsmath,amssymb,amsthm}
\usepackage{booktabs}
\usepackage[hidelinks]{hyperref}

\newtheorem{theorem}{Theorem}

\title{Counterexamples to a conjecture of Kamenetsky\\ on OEIS A173419}
\author{Rosario Patan\`e\thanks{SAMOVAR, T\'el\'ecom SudParis, Institut Polytechnique de Paris, Palaiseau, France. Email: rosario.patane@telecom-sudparis.eu}}
\date{}

\begin{document}
\maketitle

\begin{abstract}
Define $a(n)$ as the number corresponding to the shortest computation using only addition, subtraction, or multiplication, as A173419 states on the related OEIS page. In a comment to the sequence, D. Kamenetsky conjectured that $a(p)\ge a(p-1)$ for every prime. We show that the conjecture is false through four counterexamples below $5000$ that satisfy the condition $a(p)<a(p-1)$. They are exactly $3359$, $3623$, $4909$, and $4943$.
\end{abstract}

\section{Setting}

We fix $x_0=1$, and $x_m=n$, where every $x_k$ is the result of $x_i+x_j$, $x_i \cdot x_j$ or $x_i-x_j$ for some $0\le i,j < k$. We write $a(n)$ for the least such $m$, which corresponds to the OEIS A173419~\cite{oeis}. Greathouse introduced the sequence in OEIS in 2010. The sequence is the cost of the straight-line program of an integer. At the same time, bounds on $a(n!)$ are related to the intractability property of Hilbert's Nullstellensatz described in Shub and Smale~\cite{shubsmale}. The property satisfies $a(n)\le A005245(n)$, which Guy studied in his book~\cite[F26]{guy}.

The conjecture of D. Kamenetsky, in an OEIS comment of 26 December 2019, states \begin{equation}\label{eq:conj}
a(p)\ \ge\ a(p-1)\qquad\text{for every prime} \ p,
\end{equation}
which was verified for all primes $p<1800$, which corresponds to the range of the $b$-file then available.

\section{Results}

\begin{theorem}\label{thm:main}
The conjecture we presented in Eqn.~\eqref{eq:conj} is false. The prime numbers such that $p<5000$ with $a(p)<a(p-1)$ are $3359$, $3623$, $4909$, and $4943$ as reported in Table~\textup{\ref{tab:ce}}.
\end{theorem}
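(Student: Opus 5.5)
The theorem is a finite computational statement. The proof therefore has two parts: exact computation of $a(n)$ for all $n\le 5000$, and then, for the four primes, a human-checkable certificate of each inequality $a(p)<a(p-1)$.

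\textbf{Upper bounds.} For each $p\in\{3359,3623,4909,4943\}$ I will exhibit an explicit straight-line program $1=x_0,x_1,\dots,x_m=p$ of length $m=a(p)$. Anyone can verify such a chain by hand. The natural candidates are chains that reach a near-power or a product close to $p$ and then adjust by a small term. For example, $3359 = 3360-1$ with $3360=2^5\cdot 3\cdot 5\cdot 7$, or a form like $k^2\pm c$ using an intermediate value already present in the chain. The point is that $p-1$ lacks any such shortcut, so its best chain is strictly longer.

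\textbf{Lower bounds.} The delicate part is showing $a(p-1)\ge a(p)+1$, i.e.\ that \emph{no} chain of length $a(p)$ reaches $p-1$. Since $a(p)$ is about 7 or 8 here, I plan an exhaustive breadth-first enumeration of all sets $\{x_0,\dots,x_k\}$ reachable in $k$ steps. Three reductions keep it tractable:
\begin{itemize}
\item treat the chain as a set, since order among independent steps is irrelevant;
\item restrict to canonical orderings, so each new element exceeds all previous ones or is at least new;
\item discard duplicates and nonpositive values. Negative values can be replaced by their absolute values up to sign changes of $\pm$, but this needs a short lemma: $|x_i\pm x_j|$ and $|x_ix_j|$ are obtainable from the absolute values.
\end{itemize}

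\textbf{Bounding intermediates.} Intermediate values cannot simply be capped at $5000$, because subtraction allows large intermediates (e.g.\ $x^2-y$). So I will either allow values up to the largest achievable in $k$ steps, namely $2^{2^{k-1}}$, and search completely, or prune via a bound argument: the final step $x_m = x_i\circ x_j$ with operands that are themselves short-chain values. Computing the full set $S_k$ of values reachable with cost $\le k$, without caps, gives $a(n)=\min\{k: n\in S_k\}$ for every $n\le 5000$ at once. Scanning this table over all primes below $5000$ yields exactly the four listed primes, which establishes both the falsity of \eqref{eq:conj} and the completeness of the list.

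\textbf{Main obstacle.} Exhaustiveness of the lower-bound search is the step I expect to be hardest. The cost of a number is not determined by the costs of its operands, since chains share subexpressions. So one must enumerate entire chains, not a dynamic program over values. At depth around 8 the number of chains explodes. I would first try depth-first enumeration with symmetry breaking (the canonical ordering above) and a memoized set-of-values hash. As a consistency check, I would cross-validate against the OEIS $b$-file for $n<1800$, where the conjecture was already verified.
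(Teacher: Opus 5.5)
Your plan is sound in principle and follows the paper's skeleton: explicit chains for the upper bounds, and an exhaustive set-based enumeration for the lower bounds. The difference is how you get completeness.

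You compute $a(n)$ exactly for every $n\le 5000$ and scan the table. Several relevant numbers have $a(n)=9$, including $3622$, $4942$, and both members of the nine pairs $(p,p-1)$ with $p=3323,3803,\dots,4987$. So your route forces the full set $S_9$, i.e.\ enumerating every chain of length $8$ and applying one more operation. That is exactly the regime you expect to explode.

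The paper stops at $S_8$, and computing $S_8$ needs only chains of length $7$ plus one operation. This already decides every prime for which $p$ or $p-1$ lies in $S_8$. It gives $a(3358)=a(4908)=8$. For $3623$ and $4943$ it gives $a(p-1)\ge 9>8=a(p)$, which is all the inequality needs; the exact value $9$ comes from appending the step ``$-1$'' to the chains for $p$. The nine residual pairs have both members outside $S_8$, hence $a\ge 9$. They are closed by eighteen explicit length-$9$ chains, giving $a(p)=a(p-1)=9$.

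So the paper trades one level of exhaustive search for a few hand-checkable certificates. Your route pays for that extra level but yields the complete table of values below $5000$ as a by-product. Your absolute-value lemma justifying positive intermediates is correct, as is your remark that intermediates cannot be capped at $5000$; both make explicit what the paper leaves implicit.

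One caution on your symmetry breaking: only the condition that each new element is \emph{new} is sound. Requiring it to exceed all previous elements would forbid every subtraction step, since $x_i-x_j<x_i$ when $x_j>0$. Yet all four certificates end with a subtraction, e.g.\ $3359=3375-16$. A search restricted that way would compute a larger complexity and return invalid lower bounds for $a(n)$.
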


\begin{table}[ht]
\centering
\begin{tabular}{rrrll}
\toprule
$p$ & $a(p)$ & $a(p-1)$ & $p$ as & shortest computation for $p$\\
\midrule
$3359$ & $7$ & $8$ & $15^3-16$      & $1,2,4,16,15,225,3375,3359$\\
$3623$ & $8$ & $9$ & $29\cdot125-2$ & $1,2,4,5,25,29,125,3625,3623$\\
$4909$ & $7$ & $8$ & $17^3-4$       & $1,2,4,16,17,289,4913,4909$\\
$4943$ & $8$ & $9$ & $17\cdot291-4$ & $1,2,4,16,17,289,291,4947,4943$\\
\bottomrule
\end{tabular}
\caption{The four counterexamples under $5000$.}\label{tab:ce}
\end{table}

\begin{proof}
In each computation of Table~\ref{tab:ce}, every entry of the chain is obtained from the two previous entries by addition, multiplication, or subtraction. The first entry is $1$, and the last one is the prime~$p$. By counting the operations, $a(3359)\le 7$, $a(3623)\le 8$, $a(4909) \le 7$, and $a(4943) \le 8$.
  The enumeration with depth $8$ determines $a(n)$ for each $n<5000$ reachable with at most $8$ operations. This results in $a(3358)= 8$, $a(4908) =8$, and confirms the four upper bounds above as equalities. This establishes $a(n)\ge 9$ for all the others; hence, every pair $(p, p-1)$ is decided, except the unreachable ones. Among all pairs, nine $(p,p-1)$ with $p<5000$ remain undecided at depth $8$. They are $p=3323, 3803, 3929, 3947, 4139, 4583, 4783, 4871, 4987$. Neither member is reachable in $8$ operations; hence, $a(p), a(p-1) \ge 9$ for these.

Each of the eighteen numbers admits a computation of length $9$, which means $a(p)=a(p-1)=9$; none among the nine pairs violates Eqn.~\eqref{eq:conj}.
The same reasoning gives $a(3622)=a(4942)=9$: the lower bound from the enumeration, and the upper bound by appending one unitary subtraction to the computations of $3623$ and $4943$.
\end{proof}
\section{Computations}
The verification script (i) reproduces $a(1), \dots, a(108)$ and compares them with the values that are published at OEIS A173419, (ii) checks the four computations of Table \ref{tab:ce}, and
 (iii) reruns the search. The code is available at Github~\url{https://github.com/patan3saro/A173419-counterexample}. We report the results in the OEIS note.

 \textbf{AI Disclosure} The scripts were produced with the help of Claude. The repository's reproducibility guarantees verification and openness. Every result can be verified by hand, except the enumeration, which requires the code. The author found the results through the scripts and verified the computations.

\end{document}